\newtheorem{theorem}{Theorem}[section]
\newtheorem*{theorem*}{Theorem}
\newtheorem{lemma}[theorem]{Lemma}
\newtheorem{proposition}[theorem]{Proposition}
\newtheorem*{proposition*}{Proposition}
\newtheorem{corollary}[theorem]{Corollary}
\newtheorem*{corollary*}{Corollary}
\newtheorem{cit}[theorem]{Citation}
\newtheorem*{conjecture*}{Conjecture}
\newtheorem*{question*}{Question}
\newtheorem*{main:exist_simple}{Theorem~\ref{thrm:exist_simple}}
\theoremstyle{definition}
\newtheorem{definition}[theorem]{Definition}
\newcommand{\N}{\mathbb{N}}
\newcommand{\Z}{\mathbb{Z}}
\newcommand{\tree}{\mathcal{T}}
\DeclareMathOperator{\Aut}{Aut}
\DeclareMathOperator{\Homeo}{Homeo}
\DeclareMathOperator{\Area}{Area}
\DeclareMathOperator{\F}{F}
\numberwithin{equation}{section}
\begin{document}

\title{Finitely presented simple groups with at least exponential Dehn function}
\date{\today}
\subjclass[2020]{Primary 20F65;   % ggt;
                 Secondary 20E08} %trees

\keywords{Simple group, Dehn function, self-similar group, R\"over--Nekrashevych group, Thompson group, Baumslag--Solitar group}

\author[M.~C.~B.~Zaremsky]{Matthew C.~B.~Zaremsky}
\address{Department of Mathematics and Statistics, University at Albany (SUNY), Albany, NY}
\email{mzaremsky@albany.edu}

\begin{abstract}
We construct examples of finitely presented simple groups whose Dehn functions are at least exponential. To the best of our knowledge, these are the first such examples known. Our examples arise from R\"over--Nekrashevych groups, using carefully calibrated self-similar representations of Baumslag--Solitar groups.
\end{abstract}

\maketitle
\thispagestyle{empty}

\section*{Introduction}

Finitely presented simple groups have enjoyed a recent surge of interest from a geometric and topological standpoint. Caprace and R\'emy proved in \cite{caprace09,caprace10} that there exist infinitely many quasi-isometry classes of finitely presented simple groups. In \cite{skipper19}, Skipper, Witzel, and the author found examples of finitely presented simple groups with arbitrary finiteness length. Hyde and Lodha recently found examples of finitely presented simple groups that are left-orderable \cite{hyde23}. This followed a great deal of work constructing and analyzing finitely generated left-orderable simple groups \cite{hyde19,mattebon20,hyde21,fournierfacio23}. Another recent geometric result about finitely generated simple groups, proved by Belk and the author \cite{belk22}, and independently by Darbinyan and Steenbock \cite{darbinyan22}, is that every finitely generated group isometrically embeds as a subgroup of a finitely generated simple group.

In this paper we investigate another topic of interest in geometric group theory, namely Dehn functions, applied to finitely presented simple groups. The Dehn function of a finitely presented group measures how many instances of the defining relations are needed to realize every relation of a given length. It is well known that a finitely presented group has solvable word problem if and only if its Dehn function is recursively defined, and in general the Dehn function can be viewed as a geometric measurement of how difficult it is to solve the word problem (or at least as a sort of upper bound).

Finitely presented simple groups have solvable word problem \cite{kuznetsov58}, hence recursively defined Dehn function. However, among the existing examples of finitely presented simple groups where something is known about their Dehn function, the function is not only recursive but polynomial, i.e., very small. More precisely, to the best of the author's knowledge, the only finitely presented simple groups where something is known about their Dehn functions are the Burger--Mozes groups \cite{burger00}, which have quadratic Dehn function thanks to acting geometrically on a product of trees, and Thompson's groups $T$ and $V$, which are known to have polynomial Dehn function (the best bounds we are aware of are $\delta_T\preceq n^5$ \cite{wang15} and $\delta_V\preceq n^{11}$ \cite{guba00}, and one would conjecture that they are quadratic, like for Thompson's group $F$ \cite{guba06}). Presumably, close relatives like the Higman--Thompson groups $T_d$ and $V_d$, which are virtually simple, also have polynomial Dehn functions, using similar arguments. Thus, all known examples have ``very small'' Dehn function. It is worth mentioning the Brin--Thompson groups $nV$ \cite{brin04}, which are finitely presented and simple, and whose Dehn functions are unknown; it turns out that if $nV$ ($n\ge 2$) has polynomial Dehn function (or even embeds in a finitely presented group with polynomial Dehn function), then $\sf{NP}=\sf{coNP}$ \cite{birget20}.

\medskip

In this paper, we construct examples of finitely presented simple groups whose Dehn functions are strictly larger than polynomial, namely they are at least exponential. The main tool is the family of R\"over--Nekrashevych groups $V_d(G)$ of self-similar groups $G$, introduced in \cite{roever99,nekrashevych04}. Our specific examples are denoted
\[
[V_{n+2}(BS(1,n)),V_{n+2}(BS(1,n))] \text{,}
\]
the notation for which we now unpack. The group $BS(1,n)$ is the usual Baumslag--Solitar group $BS(1,n)=\langle a,b\mid aba^{-1}=b^n\rangle$ for $n\ge 2$, which has exponential Dehn function \cite{gersten92}. We find a certain self-similar representation of $BS(1,n)$ acting on the infinite rooted $(n+2)$-ary tree $\tree_{n+2}$, inspired by Bartholdi and Sunic's self-similar representation of $BS(1,n)$ acting on $\tree_{n+1}$ from \cite{bartholdi06}. This representation is calibrated to have a variety of properties, inspired by \cite{skipper19}, which ensure that, among other things, the R\"over--Nekrashevych group $V_{n+2}(BS(1,n))$ has $BS(1,n)$ as a quasi-retract, and its commutator subgroup has finite index and is simple. Putting all of this together yields our main result:

\begin{main:exist_simple}
There exist finitely presented simple groups with at least exponential Dehn function.
\end{main:exist_simple}

One might also like to find an upper bound, and presumably say that these examples have precisely exponential Dehn function, but for a couple reasons we do not approach this here. On the one hand, this would involve completely different techniques than those used here, and on the other hand, this deserves to be part of a broader program to find upper bounds on Dehn functions of arbitrary R\"over--Nekrashevych groups. In particular, we suspect that the Dehn function of any $V_d(G)$ should be bounded above by some combination of the Dehn functions of $V_d(\{1\})$ and $G$, and this could perhaps be approached by looking at the action of $V_d(G)$ on a simply connected, cocompact truncation of the Stein--Farley complex of $V_d(G)$ (see, e.g., \cite{skipper21}). In any case, this is all beyond the scope of the present paper.

It would be very interesting to find examples of finitely presented self-similar groups $G$ with even larger Dehn functions, which could then perhaps lead to finitely presented simple groups $[V_d(G),V_d(G)]$ with even larger Dehn functions, using the techniques here. There is a restriction though, that self-similar implies residually finite, and examples of residually finite groups with large Dehn function were only recently found by Kharlampovich, Myasnikov, and Sapir in \cite{kharlampovich17} using some very complicated constructions. We do not know whether their groups admit faithful self-similar representations. Another, easier, example of groups with very large Dehn function comes from the ``hydra groups'' of Dison and Riley \cite{dison13}, but Pueschel proved that these are not residually finite \cite{pueschel16}. One could also try to bypass self-similarity by looking for examples among twisted Brin--Thompson groups \cite{belk22}, which have similar properties to R\"over--Nekrashevych groups but have the advantage that the input group $G$ can be any group, not necessarily self-similar. The downside is that, unlike for R\"over--Nekrashevych groups, finite presentability of the twisted Brin--Thompson group does not follow for free from finite presentability of $G$, and in fact is rather difficult to achieve.

As a remark, one reason to expect that there exist finitely presented simple groups with large (perhaps even arbitrarily large recursive) Dehn function is the Boone--Higman conjecture, which predicts that every finitely generated group with solvable word problem embeds in a finitely presented simple group \cite{boone74}. If this holds, then embedding a group with arbitrarily difficult, solvable word problem into a finitely presented simple group would provide an arbitrarily large, recursive lower bound on the Dehn function of the simple group. The solution to the word problem in \cite{kuznetsov58} for finitely presented simple groups does not give any particular uniform upper complexity bound, so a priori there is not any reason to doubt that arbitrarily large, recursive Dehn functions are possible. See \cite{bbmz_survey} for more background on the Boone--Higman conjecture.

\medskip

This paper is organized as follows. In Section~\ref{sec:dehn} we recall some background on Dehn functions and quasi-retracts. In Section~\ref{sec:nekr} we discuss self-similar groups and R\"over--Nekrashevych groups, along with the various properties of self-similar actions that will lead to our results. Finally, in Section~\ref{sec:examples} we construct our examples.

\subsection*{Acknowledgments} Thanks are due to Emmanuel Rauzy and Giles Gardam for helpful comments and pointing out references. I am also grateful to the referee for several excellent suggestions. The author is supported by grant \#635763 from the Simons Foundation.

%-------------------------------------------------
\section{Dehn functions and quasi-retracts}\label{sec:dehn}

In this section we recall some background on Dehn functions and quasi-retracts.

\subsection{Dehn functions}\label{ssec:dehn}

We will not need to use too many details about Dehn functions, so we just give a quick definition and overview following \cite[Section~I.8A.4]{bridson99}. Let $G=\langle S\mid R\rangle$ be a finite presentation, so $S$ is a finite set, $R$ is a finite subset of the free group $F(S)$, and $G$ is the quotient of $F(S)$ by the normal closure of $R$. Write $\pi\colon F(S)\to G$ for this quotient map. An element of $\ker(\pi)$ can be written as a product of elements of $S^\pm$, or as a product of elements of the set $(R^\pm)^{F(S)}$ of conjugates of $R^\pm$ (here we write $X^\pm$ to mean the union of $X$ with the set of inverses of elements of $X$). Roughly speaking, the Dehn function of $G$ measures how different the lengths of these expressions can be.

To be more precise, for $w\in\ker(\pi)$ define the \emph{area} $\Area(w)$ of $w$ to be the word length of $w$ in the (likely infinite) generating set $(R^\pm)^{F(S)}$ of $\ker(\pi)$. Also define the \emph{length} $\ell(w)$ of $w$ to be its usual word length in the generating set $S$ of $F(S)$. Now the \emph{Dehn function} of $G$ is the function $\delta_G\colon \N\to\N$ defined via
\[
\delta_G(n) \coloneqq \max\{\Area(w)\mid w\in\ker(\pi)\text{, }\ell(w)\le n\}\text{.}
\]
Note that for any $n$, only finitely many $w$ have length at most $n$, so $\delta_G$ is well defined. (As a remark, up until now we have not actually needed $R$ to be finite, but this is a necessary assumption for various upcoming results to be true.)

The function $\delta_G$ as defined depends on the choice of finite presentation for $G$, so we tend to consider Dehn functions up to a certain equivalence relation. Given two functions $f,g\colon \N\to\N$, write $f\preceq g$ if there is a constant $K>0$ such that for all $n\in\N$ we have $f(n)\le Kg(Kn)+Kn$. If $f\preceq g$ and $g\preceq f$, write $f\simeq g$. This is an equivalence relation, and it turns out that the Dehn functions arising from two finite presentations of the same group are equivalent. More generally, the Dehn functions of any two quasi-isometric finitely presented groups are equivalent.

Note that if $f\simeq g$ then certain features are common to both $f$ and $g$. For instance, if one is linear then so is the other, and more generally if one is a polynomial of degree $m\ge 1$ then so is the other. If one of $f$ or $g$ is an exponential function, then so is the other, perhaps with a different base (for example $f(n)=2^n$ and $g(n)=3^n$ are equivalent since $3^n=2^{\log_2(3)n}$). Thus it makes sense to say that a finitely presented group, ``has a polynomial Dehn function,'' or, ``has an exponential Dehn function.''

\subsection{Quasi-retracts}\label{ssec:qr}

The proof that Dehn functions of quasi-isometric finitely presented groups are equivalent, which for example is in \cite{alonso90}, actually shows that if $H$ is a so called quasi-retract of $G$, then $\delta_H \preceq \delta_G$. This is implicit in \cite{alonso90}, and is stated explicitly for example in \cite[Theorem~3]{alonso96}. Let us recall the details of quasi-retracts now.

A function $f\colon X\to Y$ between metric spaces, with metrics $d_X$ and $d_Y$ respectively, is called \emph{$(C,D)$-Lipschitz} for $C\ge 1$ and $D\ge 0$ if for all $x,x'\in X$ we have
\[
d_Y(f(x),f(x'))\le C d_X(x,x') + D \text{.}
\]

\begin{definition}[Quasi-retract(ion)]
Let $X$ and $Y$ be metric spaces, with metrics $d_X$ and $d_Y$ respectively. If there exist $(C,D)$-Lipschitz functions $r\colon X\to Y$ and $\iota\colon Y\to X$ such that $d_Y(r\circ\iota(y),y)\le D$ for all $y\in Y$, then we call $r$ a \emph{quasi-retraction}, and call $Y$ a \emph{quasi-retract} of $X$.
\end{definition}

\begin{cit}\cite{alonso90,alonso96}\label{cit:qr_dehn}
Let $G$ and $H$ be finitely presented groups, viewed as metric spaces via word metrics coming from finite generating sets. Suppose $H$ is a quasi-retract of $G$. Then $\delta_H \preceq \delta_G$.
\end{cit}

As a remark, if a pair of functions satisfy the quasi-retraction condition when composed in either order, then they are quasi-isometries. Thus, we get that Dehn functions are a quasi-isometry invariant of finitely presented groups.

%-------------------------------------------------
\section{Self-similar groups and R\"over--Nekrashevych groups}\label{sec:nekr}

In this section we discuss the source of our examples of finitely presented simple groups. Let $\tree_d$ be the infinite rooted $d$-ary tree. We identify the vertex set of $\tree_d$ with the set $\{1,\dots,d\}^*$ of finite words in the alphabet $\{1,\dots,d\}$; the root is the empty word $\varnothing$. Two vertices are adjacent if they are of the form $w$ and $wi$ for some $i\in\{1,\dots,d\}$. An \emph{automorphism} of $\tree_d$ is a bijection from $\{1,\dots,d\}^*$ to itself that preserves adjacency.

Now consider the group $\Aut(\tree_d)$ of automorphisms of $\tree_d$. Since the root is the only vertex of degree $d$, every automorphism preserves the measurement ``distance to root''. In particular the set $\{1,\dots,d\}$ of the children of the root is stabilized by every automorphism. This gives us a surjective homomorphism $\rho_d\colon \Aut(\tree_d)\to S_d$, which clearly splits. The kernel of $\rho_d$ is isomorphic to $\Aut(\tree_d)^d$, and so we get a wreath product decomposition
\[
\Aut(\tree_d) \cong S_d \wr \Aut(\tree_d) \text{.}
\]
Note that, for future convenience, we write wreath products with the acting group on the left, and view $\Aut(\tree_d)$ acting on $\tree_d$ as a left action.

For $g\in \Aut(\tree_d)$, the \emph{wreath recursion} of $g$ is the identification $g\leftrightarrow\rho_d(g)(g_1,\dots,g_d)$ induced by this isomorphism. The automorphisms $g_i$ are called the \emph{level-1 states} of $g$. The \emph{states} of $g$ are the elements of the smallest set containing $g$ that is closed under taking level-1 states.

\begin{definition}[Self-similar]
Call a subgroup $G\le\Aut(\tree_d)$ \emph{self-similar} if for all $g\in G$, every state of $g$ is in $G$. (Equivalently, for all $g\in G$, every level-1 state of $g$ is in $G$.)
\end{definition}

For a wealth of background on self-similar groups, see \cite{nekrashevych05}. Note that sometimes in the literature ``self-similar'' requires $\rho_d(G)$ to act transitively on $\{1,\dots,d\}$, but we do not require this. We will also refer to a \emph{self-similar action} of a group, which is a homomorphism from the group to $\Aut(\tree_d)$ whose image is self-similar. When we call a group self-similar, we are really implicitly referring to a fixed faithful self-similar action of the group. Given a group $G$ together with a declared wreath recursion for each element of some generating set, we get a well defined self-similar action of $G$, assuming that the defining relations of $G$ are satisfied by the wreath recursions.

\begin{definition}[Rational]
An element of a self-similar group is \emph{rational} (or \emph{finite-state}) if it has finitely many states. Call the group itself \emph{rational} if every element is rational.
\end{definition}

If every generator of the group is rational, then the same is true of every element, so it suffices to check rationality on the elements of some choice of generating set.

\medskip

Now we shift focus to the boundary of $\tree_d$, which is the $d$-ary Cantor set $C_d=\{1,\dots,d\}^\N$. For each $w\in\{1,\dots,d\}^*$, the \emph{cone} on $w$ is the basic open set $C_d(w)\coloneqq \{w\kappa\mid \kappa\in C_d\}$ in $C_d$. Any cone is canonically homeomorphic to $C_d$, via the \emph{canonical homeomorphism}
\[
h_w \colon C_d \to C_d(w)
\]
sending $\kappa$ to $w\kappa$.

\begin{definition}[R\"over--Nekrashevych group]
Let $G\le \Aut(\tree_d)$ be self-similar. The \emph{R\"over--Nekrashevych group} $V_d(G)$ is the subgroup of $\Homeo(C_d)$ consisting of all homeomorphisms constructed as follows:
\begin{enumerate}
    \item Partition $C_d$ into finitely many cones $C_d(w_1^+),\dots,C_d(w_n^+)$.
    \item Partition $C_d$ into the same number of cones in some possibly different way $C_d(w_1^-),\dots,C_d(w_n^-)$.
    \item Map $C_d$ to itself by sending each $C_d(w_i^+)$ to some $C_d(w_j^-)$ via the map $h_{w_j^-}\circ g_i\circ h_{w_i^+}^{-1}$ for some $g_i\in G$.
\end{enumerate}
\end{definition}

In words, an element of $V_d(G)$ acts on a cone $C_d(w_i^+)$ in the domain partition by removing the old prefix $w_i^+$, then acting via some element of $G$, and then adding a new prefix $w_j^-$. The self-similarity condition ensures that $V_d(G)$ is closed under compositions, and so really is a group. R\"over--Nekrashevych groups were introduced first by R\"over in \cite{roever99} for the special case when $G$ is the Grigorchuk group from \cite{grigorchuk80,grigorchuk84}, and in generality by Nekrashevych in \cite{nekrashevych04}. If $G$ is finitely generated, then so is $V_d(G)$, and if $G$ is finitely presented, then so is $V_d(G)$ (more generally if $G$ is of type $\F_n$ then so is $V_d(G)$ \cite[Theorem~4.15]{skipper19}). When $G=\{1\}$, the R\"over--Nekrashevych group $V_d(\{1\})$ is the classical Higman--Thompson group $V_d$.

\begin{definition}[Weakly diagonal]
Call a self-similar group $G\le\Aut(\tree_d)$ \emph{weakly diagonal} if there exists a generating set $S$ for $G$ such that for all $s\in S$ with wreath recursion $s\leftrightarrow\rho_d(s)(s_1,\dots,s_d)$, each $s_i$ satisfies that $s_is^{-1}[G,G]$ has finite order in the abelianization $G/[G,G]$. Call a self-similar action \emph{weakly diagonal} if its image is.
\end{definition}

In \cite{skipper19}, the key property of a self-similar group $G$ that ensured virtual simplicity of $V_d(G)$ was being ``coarsely diagonal''---this means that for all $g\in G$, for any level-1 state $g_i$, the element $g^{-1}g_i$ has finite order (which is equivalent to $g_i g^{-1}$ having finite order). Our notion of weakly diagonal here is a much weaker condition, since it only requires a condition on generators, and only requires finite order in the abelianization. The examples we will construct later will not be coarsely diagonal, so we really need this new notion of weakly diagonal. As we now see, it is still sufficient to ensure virtual simplicity.

\begin{lemma}\label{lem:v_simple}
If $G\le\Aut(\tree_d)$ is self-similar and weakly diagonal, then $V_d(G)$ is virtually simple. More precisely, the commutator subgroup $[V_d(G),V_d(G)]$ is simple and has finite index in $V_d(G)$.
\end{lemma}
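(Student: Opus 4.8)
The plan is to prove the two assertions separately. \emph{Simplicity} of $K:=[V_d(G),V_d(G)]$ will follow from the stronger fact that every nontrivial normal subgroup of $V_d(G)$ contains $K$, and this uses only self-similarity. \emph{Finiteness of the index} $[V_d(G):K]$ is where the weakly diagonal hypothesis enters, through a computation of $V_d(G)^{\mathrm{ab}}$.

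For the simplicity half I would run the classical Higman-style commutator argument in the setting of $V_d(G)$. Write $V_d:=V_d(\{1\})$ for the Higman--Thompson subgroup of $V_d(G)$, and for a cone $C_d(w)$ let $\iota_w(\gamma)\in V_d(G)$ denote the element acting as $\gamma$ inside $C_d(w)$ (via $h_w$) and as the identity on the rest of $C_d$, so that the image of $\iota_w$ is the rigid stabilizer of $C_d(w)$, canonically isomorphic to $V_d(G)$, and $K_w:=[\iota_w(V_d(G)),\iota_w(V_d(G))]\cong K$. Given a nontrivial normal subgroup $N\trianglelefteq V_d(G)$, pick $1\neq g\in N$: because tree automorphisms preserve levels, $g$ restricted to a deep enough cone is a single prefix-replacement composed with a state, so one may choose a proper cone $C_d(w)$ with $g(C_d(w))=C_d(w')$ a proper cone disjoint from $C_d(w)$. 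A short computation then shows that for $a,b\in\iota_w(V_d(G))$ the double commutator $[[g,a],b]$ lies in $N$, is supported in $C_d(w)$, and there equals $[a^{-1},b]$; letting $a,b$ vary yields $K_w\subseteq N$. Since $V_d$ acts transitively on proper cones, conjugating $K_w$ around gives $K_{w'}\subseteq N$ for every proper cone $C_d(w')$; combined with Higman's theorem (that $V_d'$ is simple and $V_d/V_d'$ is trivial or $\Z/2$), this forces $V_d'\subseteq N$. Finally, using the generation $V_d(G)=\langle V_d,\{\iota_1(s):s\in S\}\rangle$ for a generating set $S$ of $G$, one checks that all pairwise commutators of these generators become trivial in $V_d(G)/N$: indeed $[\iota_1(s),\iota_1(t)]=\iota_1([s,t])\in K_1\subseteq N$, while $\iota_1(s)$ commutes with any transposition of two top cones disjoint from $C_d(1)$, and the images of such transpositions generate the image of $V_d$ in $V_d(G)/N$ (for odd $d$, necessarily $\ge 3$; for even $d$ that image is already trivial since $V_d=V_d'\subseteq N$). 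Hence $V_d(G)/N$ is abelian, so $N\supseteq K$. Consequently $K$ is perfect (as $[K,K]$ is a nontrivial normal subgroup of $V_d(G)$), and $\langle K_{w'}:w'\neq\varnothing\rangle$, being a nontrivial normal subgroup of $V_d(G)$, equals $K$; then for a nontrivial $M\trianglelefteq K$ the same commutator trick with $a,b\in K_w\subseteq K$ gives $K_w=[K_w,K_w]\subseteq M$, and normality of $M$ in $K$ plus transitivity of $V_d'\subseteq K$ on proper cones promotes this to $M\supseteq K_{w'}$ for all $w'$, so $M=K$.

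For finiteness of $V_d(G)^{\mathrm{ab}}$, let $\Phi\colon G^{\mathrm{ab}}\to V_d(G)^{\mathrm{ab}}$ be the map induced by the homomorphism $\gamma\mapsto\iota_1(\gamma)$, $G\to V_d(G)$. Two observations drive the argument. First, the first-level wreath recursion $\gamma\leftrightarrow\rho_d(\gamma)(\gamma_1,\dots,\gamma_d)$ translates, in $V_d(G)^{\mathrm{ab}}$, into $\Phi([\gamma])\equiv\sum_{i=1}^d\Phi([\gamma_i])$ modulo the image $A_0$ of $V_d$, and $A_0$ is finite (a quotient of $V_d^{\mathrm{ab}}$, which by Higman is trivial or $\Z/2$). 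Second, $V_d(G)^{\mathrm{ab}}$ is generated by $A_0$ together with the classes $\Phi([s])$, $s\in S$, again by the generation $V_d(G)=\langle V_d,\{\iota_1(s):s\in S\}\rangle$. Now take $S$ to witness weak diagonality, so that for each $s$ there is $m_s\ge 1$ with $m_s([s_i]-[s])=0$ in $G^{\mathrm{ab}}$ for all $i$; multiplying the first relation (for $\gamma=s$) by $m_s$ collapses the right-hand sum to $d\,m_s\Phi([s])$, whence $(1-d)\,m_s\Phi([s])\in A_0$. Since $A_0$ is finite and $d\ge 2$, each $\Phi([s])$ has finite order, so $V_d(G)^{\mathrm{ab}}$ is generated by finitely many torsion elements (taking $S$ finite, as one may when $G$ is finitely generated) and is therefore finite.

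I expect the main work to lie in the bookkeeping of these two steps rather than in any single deep ingredient. In the index computation one must extract from the wreath recursion exactly the right identity in the abelianization and see that ``finite order in $G^{\mathrm{ab}}$ along a generating set'' --- rather than the coarsely diagonal condition of \cite{skipper19} --- already suffices, the surviving factor $1-d$ being what forces torsion. In the simplicity argument the delicate point is avoiding a circularity when passing from ``$N$ contains a copy of $K$ on one cone'' to ``$N\supseteq K$'': I sidestep it by first deriving $V_d'\subseteq N$ outright and only then killing the remaining commutators in $V_d(G)/N$, after which the descent to normal subgroups of $K$ is routine.
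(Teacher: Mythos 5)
Your proof splits cleanly into two halves, and they fare very differently against the paper.

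The finite-index half is essentially the paper's own argument, written additively. The paper's proof uses the generation $V_d(G)=\langle V_d,\iota_1(G)\rangle$, notes that $V_d$ has finite image in $V_d(G)^{\mathrm{ab}}$, and then uses the wreath recursion $\iota_1(s)=\sigma\,\iota_{11}(s_1)\cdots\iota_{1d}(s_d)$ together with weak diagonality to conclude $\iota_1(s)^{d-1}$ is torsion in the abelianization. Your $\Phi([s])\equiv\sum_i\Phi([s_i])\pmod{A_0}$ followed by $(1-d)m_s\Phi([s])\in A_0$ is the same computation; the factor $1-d$ in your version and the exponent $d-1$ in the paper's are the same thing. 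This part is correct (given $G$ finitely generated, which both you and the paper implicitly use) and matches the paper.

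The simplicity half is where you take a genuinely different route: the paper simply invokes Nekrashevych's Theorem~4.7 of \cite{nekrashevych04}, which states that $[V_d(G),V_d(G)]$ is simple for \emph{any} self-similar $G$, whereas you reprove it from scratch with a Higman/Epstein-style double-commutator argument. Most of that argument is sound (the computation $[[g,a],b]=[a^{-1},b]$ on $C_d(w)$, the use of Higman's classification of normal subgroups of $V_d$, the verification that $V_d(G)/N$ is abelian for any nontrivial $N\trianglelefteq V_d(G)$), but there is a real gap at the assertion that $\langle K_{w'}:w'\neq\varnothing\rangle$ is a \emph{normal} subgroup of $V_d(G)$, which you need in order to conclude it equals $K$. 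Normality is not obvious here: an element $v\in V_d(G)$ does not carry cones to cones in general --- a shallow cone $C_d(w)$ is typically mapped by $v$ to a union of several cones at varying depths --- so $vK_wv^{-1}$ is not of the form $K_{w''}$, nor is it clear that it lies in $\langle K_{w'}\rangle$. (The observation that for \emph{sufficiently deep} $w$ one does have $vK_wv^{-1}=K_{w''}$ does not help, since $\langle K_{w'}\rangle=\langle K_1,\dots,K_d\rangle$ is already exhausted by the shallowest cones.) Without this normality, the final step $M\supseteq\langle K_{w'}\rangle=K$ for a nontrivial $M\trianglelefteq K$ collapses. This is precisely the kind of technical point that Nekrashevych's theorem handles, and which the paper avoids by citing it; if you want to keep a self-contained proof, you would need to either establish that normality directly or reorganize the descent from $N\trianglelefteq V_d(G)$ to $M\trianglelefteq K$ along the lines of Nekrashevych's or Epstein's original arguments. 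As the proposal stands, the cleanest repair is to replace the simplicity half with the citation, which is exactly what the paper does.
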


\begin{proof}
Nekrashevych proved that the commutator subgroup $[V_d(G),V_d(G)]$ is always simple \cite[Theorem~4.7]{nekrashevych04}, so it suffices to prove that $V_d(G)$ has finite abelianization. We will roughly follow the strategy from the proof of \cite[Theorem~3.3]{skipper19}, where it was assumed that $G$ is coarsely diagonal (here we only assume it is weakly diagonal). The group $V_d(G)$ is generated by the Higman--Thompson group $V_d=V_d(\{1\})$ together with a certain copy of $G$, which we now explain. Let $\iota_1\colon V_d(G)\to V_d(G)$ send $h$ to the homeomorphism defined by $\iota_1(h)(1\kappa)\coloneqq 1h(\kappa)$ and $\iota_1(h)(i\kappa)=i\kappa$ for all $2\le i\le d$ and all $\kappa\in C_d$. In words, $\iota_1(h)$ acts like $h$ on the cone $C_d(1)$ and acts trivially everywhere else. More generally, for any $w\in \{1,\dots,d\}^*$, let $\iota_w\colon V_d(G)\to V_d(G)$ send $h$ to the homeomorphism that acts like $h$ on $C_d(w)$ and trivially everywhere else. It is easy to see that $\iota_w(h)$ is conjugate in $V_d(G)$ to $\iota_{w'}(h)$, via conjugation by an element of $V_d$, for any $h\in V_d(G)$ and any non-empty $w$ and $w'$. Now recall that $V_d(G)$ is generated by $V_d$ and $\iota_1(G)$; this is \cite[Lemma~5.11]{nekrashevych18}, and see \cite[Lemma~3.4]{skipper19} for the result using our current notation. Intuitively, the reason is that, by construction $V_d(G)$ is generated by $V_d$ together with all the $\iota_w(G)$ for $w\in \{1,\dots,d\}^*$, then thanks to wreath recursion we may assume $w$ is non-empty, and finally thanks to conjugation by $V_d$ we only need $\iota_1(G)$.

Now it suffices to prove that every generator of $V_d(G)$ from the generating set $V_d\cup\iota_1(G)$ has finite order in the abelianization. The group $V_d$ is virtually simple, so elements of $V_d$ have finite order in the abelianization. Since $G$ is weakly diagonal, we can choose a generating set $S$ for $G$ satisfying the property from the definition of weakly diagonal. Now $\iota_1(G)$ is generated by $\iota_1(S)$, so we want to show that every element of $\iota_1(S)$ has finite order in the abelianization of $V_d(G)$. For $s\in S$, let $s\leftrightarrow \rho_d(s)(s_1,\dots,s_d)$ be the wreath recursion of $s$. Using the $\iota_w$ maps, this is the same as $s=\rho_d(s) \iota_1(s_1)\cdots\iota_d(s_d)$. Since $\iota_1$ is a homomorphism, and clearly $\iota_w\circ \iota_{w'}=\iota_{ww'}$ for any $w$ and $w'$, we get $\iota_1(s)=\sigma \iota_{11}(s_1)\cdots\iota_{1d}(s_d)$ for some permutation $\sigma$. Since each $\iota_{1i}(s_i)$ is conjugate to $\iota_1(s_i)$, we conclude that $\iota_1(s_1)\cdots\iota_1(s_d)\iota_1(s)^{-1}$ has finite order in the abelianization, namely, its order divides the order of $\sigma$. Next note that by weak diagonality each $s_i s^{-1}$ has finite order in the abelianization, and since $\iota_1$ is a homomorphism the same is true of each $\iota_1(s_i)\iota_1(s)^{-1}$. Now multiplying $\iota_1(s_1)\cdots\iota_1(s_d)\iota_1(s)^{-1}$ by the inverse of each $\iota_1(s_i)\iota_1(s)^{-1}$, we conclude that $\iota_1(s)^{d-1}$ has finite order in the abelianization. Since $d\ge 2$, this implies that $\iota_1(s)$ has finite order in the abelianization, as desired.
\end{proof}

\begin{definition}[Persistent]
Call a self-similar group $G\le \Aut(\tree_d)$ \emph{persistent} if for all $g\in G$, in the wreath recursion $g\leftrightarrow\rho_d(g)(g_1,\dots,g_d)$ we have $g_d=g$.
\end{definition}

\begin{lemma}\label{lem:persistent}
For any self-similar group $G\le \Aut(\tree_{d-1})$, there is a faithful, persistent, self-similar action of $G$ on $\tree_d$. If the action on $\tree_{d-1}$ is rational then so is the action on $\tree_d$. If the action on $\tree_{d-1}$ is weakly diagonal then so is the action on $\tree_d$.
\end{lemma}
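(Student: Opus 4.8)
The plan is to embed $\tree_{d-1}$ into $\tree_d$ by using the last child of each vertex to "park" a copy of the original tree, and to make this parking recursive so that the persistence condition $g_d=g$ holds automatically. Concretely, I would first fix a faithful self-similar action of $G$ on $\tree_{d-1}$, so each $g\in G$ has a wreath recursion $g\leftrightarrow\rho_{d-1}(g)(g_1,\dots,g_{d-1})$ with all $g_i\in G$. I then define an action on $\tree_d$ by declaring, for each generator $s$ of $G$, the new wreath recursion $s\leftrightarrow \widehat{\rho}_d(s)(s_1,\dots,s_{d-1},s)$, where $\widehat{\rho}_d(s)\in S_d$ is the image of $\rho_{d-1}(s)\in S_{d-1}$ under the natural inclusion $S_{d-1}\hookrightarrow S_d$ fixing the letter $d$. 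In other words, on the first $d-1$ children we copy the old recursion, and on the $d$-th child we put $s$ itself. Since the relators of $G$ hold for the old recursions and the last coordinate is just $s$, the relators still hold for the new recursions, so this genuinely defines a self-similar action of $G$ on $\tree_d$; call its image $\widehat{G}$, which is persistent by construction.

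Next I would check faithfulness. If $g\in G$ acts trivially on $\tree_d$, then in particular it acts trivially on the subtree hanging below the vertex $d$, and on the subtree below $d$ the action is again that of $\widehat{g}$ on $\tree_d$ (by persistence), so this is circular; instead, observe that the vertices of the form $w\in\{1,\dots,d-1\}^*$ together with their descendants under the first $d-1$ coordinates carry exactly the original action of $G$ on $\tree_{d-1}$, so triviality of $\widehat{g}$ forces triviality of $g$ on $\tree_{d-1}$, hence $g=1$ by faithfulness of the original action. (More carefully: by induction on word length, the level-$k$ state of $\widehat{g}$ at a vertex $w\in\{1,\dots,d-1\}^k$ is $\widehat{g_w}$ where $g_w$ is the corresponding level-$k$ state of $g$ in the old action, so the restriction of $\widehat{g}$ to the "$\{1,\dots,d-1\}^*$ part" reproduces the old action on $\tree_{d-1}$.)

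For rationality: the set of states of $\widehat{s}$ is contained in $\{\widehat{t}\mid t\text{ a state of }s\text{ in the old action}\}$, because every level-1 state of $\widehat{t}$ is either $\widehat{t}$ itself (the $d$-th coordinate) or of the form $\widehat{t_i}$ for a level-1 state $t_i$ of $t$ in the old action. Hence if $s$ has finitely many states in $\tree_{d-1}$, then $\widehat{s}$ has finitely many states in $\tree_d$, and this passes to the whole group since it suffices to check generators. For weak diagonality: starting from a generating set $S$ of $G$ witnessing weak diagonality, the set $\widehat{S}$ generates $\widehat{G}$, and for $\widehat{s}\in\widehat{S}$ with recursion $\widehat{s}\leftrightarrow\widehat{\rho}_d(s)(s_1,\dots,s_{d-1},s)$ the new states are $s_1,\dots,s_{d-1}$ (each satisfying that $s_i s^{-1}$ has finite order in $G/[G,G]=\widehat{G}/[\widehat{G},\widehat{G}]$, since $G\cong\widehat{G}$) together with $s$ itself, and $s\cdot s^{-1}=1$ trivially has finite order in the abelianization. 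So $\widehat{G}$ is weakly diagonal.

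The only mildly delicate point is making the faithfulness argument non-circular; the resolution is to track states along vertices in $\{1,\dots,d-1\}^*$ only, where persistence is never invoked and the original action is faithfully reproduced. Everything else is a routine bookkeeping check that the prescribed wreath recursions are consistent with the relators and that the three properties (persistence, rationality, weak diagonality) are visibly preserved by the construction.
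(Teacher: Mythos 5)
Your construction (extend the permutation by fixing $d$, place $g$ itself in the $d$-th coordinate of the wreath recursion, restrict to the natural copy of $\tree_{d-1}$ inside $\tree_d$ for faithfulness, observe that states correspond for rationality, and note that the extra state $g$ contributes $gg^{-1}=1$ for weak diagonality) is exactly the paper's proof. The only cosmetic difference is that you define the recursion on a generating set and invoke the relators, whereas the paper assigns the recursion directly to each $g\in G$; both yield the same well-defined action.
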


\begin{proof}
For $g\in G$ write the wreath recursion of $g$ as $g\leftrightarrow\rho_{d-1}(g)(g_1,\dots,g_{d-1})$. Now define an action of $G$ on $\tree_d$ by first extending the action $\rho_{d-1}$ of $G$ on $\{1,\dots,d-1\}$ to an action $\rho_d$ of $G$ on $\{1,\dots,d\}$ by fixing $d$, and then recursively defining an action on all of $\tree_d$ via the wreath recursion $g\leftrightarrow\rho_d(g)(g_1,\dots,g_{d-1},g)$. This new action is persistent by construction, so we just need to prove that it is faithful. Note that there is a natural copy of $\tree_{d-1}$ inside $\tree_d$, coming from the natural inclusion of $\{1,\dots,d-1\}$ into $\{1,\dots,d\}$, and the action of $G$ on $\tree_d$ stabilizes $\tree_{d-1}$. The restriction of the new action of $G$ on this copy of $\tree_{d-1}$ is equal to the original action, and so we conclude that since the action of $G$ on $\tree_{d-1}$ is faithful, so is the action of $G$ on $\tree_d$. The set of states of a given element under the action on $\tree_{d-1}$ is equal to the set of states of that element under the action on $\tree_d$, so if the original action is rational, so is the new action. If the original action is weakly diagonal, then it is trivial to see that the new action is too.
\end{proof}

\begin{corollary}\label{cor:simple_and_qr}
Let $G\le\Aut(\tree_d)$ be a finitely generated, persistent, weakly diagonal, rational, self-similar group. Then $V_d(G)$ is virtually simple by virtue of $[V_d(G),V_d(G)]$ being simple and finite index, and there exists a quasi-retraction $V_d(G)\to G$. If $G$ is finitely presented, then so is $V_d(G)$ and we have $\delta_G\preceq \delta_{V_d(G)}$.
\end{corollary}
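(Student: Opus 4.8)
The first and third assertions are essentially free. Since $G$ is weakly diagonal, Lemma~\ref{lem:v_simple} gives that $[V_d(G),V_d(G)]$ is simple and of finite index in $V_d(G)$, which is the virtual simplicity statement; since $G$ is finitely generated so is $V_d(G)$, and if $G$ is finitely presented then so is $V_d(G)$ by \cite[Theorem~4.15]{skipper19}. Finally, once we produce a quasi-retraction $V_d(G)\to G$, the inequality $\delta_G\preceq\delta_{V_d(G)}$ is immediate from Citation~\ref{cit:qr_dehn}. So the whole content is the construction of a quasi-retraction, and that is where I would put all of the effort.

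For the embedding $\iota\colon G\to V_d(G)$ I would take the tautological inclusion: $G\le\Aut(\tree_d)$ acts on $C_d$, so each element of $G$ is an element of $V_d(G)$ (realized by the one-cone partition), and $\iota$ is an injective homomorphism between finitely generated groups, hence Lipschitz --- no hypothesis beyond finite generation is used here. The retraction $r\colon V_d(G)\to G$ is where persistence and rationality come in. Persistence says every $g\in G$ fixes the ray $ddd\cdots$ and acts on each cone $C_d(d^k)$ as $h_{d^k}\circ g\circ h_{d^k}^{-1}$. Hence for any $v\in V_d(G)$, if $C_d(d^m)$ is the domain cone of $v$ containing this ray and $v$ sends it to $C_d(z)$ by $h_z\circ g_v\circ h_{d^m}^{-1}$, then for every $k\ge m$ the element $v$ sends $C_d(d^k)$ to $C_d(zd^{k-m})$ and, once the prefixes are stripped, acts there by the single element $g_v\in G$, independently of the cone representation chosen for $v$. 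This gives a well-defined ``asymptotic germ'' assignment $v\mapsto g_v$ that restricts to the identity on $\iota(G)$ (a persistent $g$ fixes the ray and has germ $g$). I would let $r$ be a normalized version of this assignment: the normalization must absorb the fact that an element of the Higman--Thompson subgroup $V_d\le V_d(G)$ can move the reference ray and thereby alter the raw germ by an unbounded amount, so $r$ should first undo that displacement using a bounded-complexity element of $V_d$ and only then read off the germ, arranged so that elements of $\iota(G)$ are left untouched (so that $r\circ\iota=\id_G$).

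The main obstacle is proving that $r$ is $(C,D)$-Lipschitz, and rationality of $G$ is the hypothesis that should make this work. Recall $V_d(G)=\langle V_d,\iota_1(G)\rangle$ \cite[Lemma~3.4]{skipper19}, so it suffices to bound $d_G(r(v),r(vx))$ uniformly in $v$ as $x$ ranges over $V_d$-generators together with $\iota_1(S)$ for a generating set $S$ of $G$. For $x\in\iota_1(S)$ this is trivial: $\iota_1(s)$ is the identity on $C_d(d)$, hence on every $C_d(d^k)$, so it does not touch $v$ near the reference ray and $r(v\iota_1(s))=r(v)$. The work is all in the $V_d$-generators: such an $x$ moves the reference ray only boundedly and reroutes the cones $C_d(d^k)$ only through a bounded-depth portion of $v$, so that the element of $G$ recorded by $r(vx)$ is read off from a bounded-depth state of one of the finitely many $G$-elements attached to $v$. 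Rationality is what makes this controllable --- only finitely many states ever occur, so the rerouting involves a bounded amount of data --- but one still has to arrange the normalization so that it genuinely cancels the resulting displacement rather than merely moving it somewhere else; indeed the raw germ assignment by itself is not Lipschitz, as a single cone-permuting element of $V_d$ already shows. I expect that last point to be the delicate part; granting it, the corollary follows by combining the quasi-retraction with Citation~\ref{cit:qr_dehn}.
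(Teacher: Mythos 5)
Your scaffolding matches the paper's exactly: virtual simplicity comes from Lemma~\ref{lem:v_simple}, finite presentability of $V_d(G)$ from \cite[Theorem~4.15]{skipper19}, and $\delta_G\preceq\delta_{V_d(G)}$ from Citation~\ref{cit:qr_dehn} once a quasi-retraction is in hand. Where you diverge is the quasi-retraction itself: the paper does not construct one, it simply cites \cite[Proposition~5.5]{skipper19}, and that citation is precisely where the persistent and rational hypotheses are spent. You attempt to build the quasi-retraction from scratch via a germ-at-the-ray map, and this is where there is a genuine gap.

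The gap is real, and the proposed repair does not close it. You correctly observe that the raw germ assignment $v\mapsto g_v$ (the $G$-label on the domain cone containing the ray $ddd\cdots$) is well defined thanks to persistence, and restricts to the identity on the tautological copy of $G$, but is not Lipschitz. Concretely: take $v_k=\iota_1(b^k)$ for $b\in G$ of infinite order, and take $x\in V_d$ to be the element swapping $C_d(1)$ with $C_d(d)$. Then $g_{v_k}=1$ while $g_{v_kx}=b^k$, so $d_G(g_{v_k},g_{v_kx})\to\infty$ even though $d_{V_d(G)}(v_k,v_kx)$ stays bounded. Your proposed fix --- ``undo the displacement of the reference ray by a bounded-complexity element $u\in V_d$ and only then read off the germ'' --- does not change this. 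Elements of $V_d$ carry no $G$-data, so post-composing $v$ by such a $u$ leaves the germ at $ddd\cdots$ unchanged, while pre-composing by $u$ merely replaces it with the germ of $v$ at $u(ddd\cdots)$, which is exactly the uncontrolled quantity you were trying to tame. Either way the ``normalized'' germ is as non-Lipschitz as the raw one. You flag this normalization as ``the delicate part'' and leave it open; in fact it is the entire content of the quasi-retraction, and as sketched it does not work. The clean route is the paper's: invoke \cite[Proposition~5.5]{skipper19} directly.
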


\begin{proof}
Lemma~\ref{lem:v_simple} says that $[V_d(G),V_d(G)]$ is simple and finite index. The existence of a quasi-retraction follows from \cite[Proposition~5.5]{skipper19}. If $G$ is finitely presented, then so is $V_d(G)$, for example by \cite[Theorem~4.15]{skipper19}. That $\delta_G \preceq \delta_{V_d(G)}$ now follows from Citation~\ref{cit:qr_dehn}.
\end{proof}

%-------------------------------------------------
\section{Our examples}\label{sec:examples}

Now we can build our examples of finitely presented simple groups with at least exponential Dehn function. They will arise as commutator subgroups of R\"over--Nekrashevych groups where the self-similar input group is a Baumslag--Solitar group. Recall that the \emph{Baumslag--Solitar group} $BS(m,n)$ is the group
\[
BS(m,n)\coloneqq \langle a,b\mid ab^m a^{-1} = b^n\rangle \text{.}
\]
For most values of $(m,n)$, the group $BS(m,n)$ does not stand a chance of being self-similar, since it is not even residually finite. (It is easy to see that $\Aut(\tree_d)$ is residually finite, and hence so are all self-similar groups.) Thus, we focus on the residually finite case of $m=1$, where moreover $BS(1,n)$ is known to be self-similar \cite{bartholdi06}. It is well known that $BS(1,n)$ has exponential Dehn function for all $n\ge 2$ \cite{gersten92}.

Let us explicitly realize $BS(1,n)$ ($n\ge 2$) as a self-similar group, using wreath recursions inspired by the automata in \cite{bartholdi06}. We want to define an action of $BS(1,n)$ on $\tree_{n+1}$. First let $\rho_{n+1}\colon BS(1,n)\to S_{n+1}$ send $a$ to $\alpha\coloneqq (2~n+1)(3~n)(4~n-1)\cdots$ and $b$ to $\beta\coloneqq (1~2~\cdots~n+1)$. To be more precise, if $n$ is even then $\alpha$ ends with $(\frac{n}{2}+1 ~ \frac{n}{2}+2)$ and if $n$ is odd then it ends with $(\frac{n-1}{2}+1 ~ \frac{n-1}{2}+3)$. It is easy to check that $\alpha\beta\alpha^{-1}=\beta^n$, so $\rho_{n+1}$ is well defined. Now we extend this to an action on all of $\tree_{n+1}$ via the wreath recursions
\[
a\leftrightarrow \alpha(a,a,ba,b^2a,\dots,b^{n-1}a)\text{ and }
b\leftrightarrow \beta(1,\dots,1,b) \text{.}
\]

\begin{proposition}\label{prop:check_everything}
The self-similar action of $BS(1,n)$ on $\tree_{n+1}$ defined by the above wreath recursions is well defined, faithful, rational, and weakly diagonal.
\end{proposition}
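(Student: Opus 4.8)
The plan is to verify the four properties in turn, throughout using the generating set $S=\{a,b\}$ and recording that the abelianization of $BS(1,n)$ is $\Z\times\Z/(n-1)$, in which the image of $b$ has order dividing $n-1$; this last fact makes weak diagonality almost immediate and is convenient for rationality as well.

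\emph{Well-definedness.} By the discussion preceding the statement, it suffices to check that the automorphisms of $\tree_{n+1}$ determined by the two displayed wreath recursions satisfy $aba^{-1}=b^n$, equivalently $ab=b^na$. I would compute the wreath recursion of each side. A direct calculation from the recursions for $a$ and $b$ gives
\[
ab\ \leftrightarrow\ \alpha\beta\,(a,\ ba,\ b^2a,\ \dots,\ b^{n-1}a,\ ab)\text{,}
\]
while an easy induction on $k$ shows $b^k\leftrightarrow\beta^k(1,\dots,1,b,\dots,b)$ with $k$ trailing copies of $b$, for $0\le k\le n+1$, whence
\[
b^na\ \leftrightarrow\ \beta^n\alpha\,(a,\ ba,\ b^2a,\ \dots,\ b^{n-1}a,\ b^na)\text{.}
\]
Since $\alpha\beta\alpha^{-1}=\beta^n$, the root permutations $\alpha\beta$ and $\beta^n\alpha$ coincide, and the first $n$ level-$1$ states of the two sides are literally equal; only in the last coordinate do they differ, by $ab$ versus $b^na$. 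Setting $w\coloneqq ab(b^na)^{-1}$, it follows that $w$ has trivial root permutation and that, in its wreath recursion, every level-$1$ state is trivial except in one coordinate $i_0$, where the state equals $w$ itself. A ``shortest moved vertex'' argument then finishes: if $w\ne\id$, choose a vertex $v$ of minimal length moved by $w$; since automorphisms fix the root we have $|v|\ge 1$, and since all level-$1$ states of $w$ except the $i_0$-th are trivial and the root permutation is trivial, $v$ must begin with $i_0$, say $v=i_0v'$; but then $w$ moves $v'$, contradicting minimality. Hence $w=\id$, the relation holds, and the action is well defined. I expect this to be the main obstacle, precisely because the relator reappears as one of its own sections, so a plain computation does not close the loop and one needs the minimality trick (equivalently, a short induction on levels).

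\emph{Faithfulness.} Here I would use the identification $BS(1,n)\cong\Z[1/n]\rtimes\Z$, with $b$ corresponding to $1\in\Z[1/n]$ and $a$ generating the $\Z$ acting by multiplication by $n$; recall that $\Z[1/n]$ is the kernel of the retraction $BS(1,n)\to\Z$. First, $b$ acts with infinite order: the induction above gives $b^{n+1}\leftrightarrow\id(b,\dots,b)$, i.e.\ $b^{n+1}$ is the image of $b$ under the injective ``diagonal'' endomorphism $g\mapsto\id(g,\dots,g)$ of $\Aut(\tree_{n+1})$, so a minimal-order argument (using also that $\beta$ has order $n+1$) shows no nontrivial power of $b$ is the identity. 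Now let $N$ be the kernel of the action, a normal subgroup of $BS(1,n)$. Then $N\cap\Z[1/n]$ is a subgroup of $\Z[1/n]$ invariant under multiplication by $n^{\pm1}$, hence a $\Z[1/n]$-submodule; were it nonzero it would contain a nonzero integer, i.e.\ a nonzero power of $b$, contradicting the previous sentence, so $N\cap\Z[1/n]=0$. Finally, if some $g\in N$ had nonzero image $s\in\Z$, a direct computation in $\Z[1/n]\rtimes\Z$ identifies $gbg^{-1}$ with the element $n^s\in\Z[1/n]$, so $[g,b]$ is identified with $n^s-1\in\Z[1/n]\setminus\{0\}$ (nonzero since $n\ge 2$ and $s\ne 0$); as $[g,b]\in N$, this contradicts $N\cap\Z[1/n]=0$. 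Hence $N\subseteq\Z[1/n]$ and therefore $N=N\cap\Z[1/n]=0$.

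\emph{Rationality and weak diagonality.} For rationality it suffices, as noted after the definition of rational, to check the generators. The state set of $b$ is $\{1,b\}$. For $a$, I would verify by the same sort of computation as above that for each $j$ with $0\le j\le n-1$ the level-$1$ states of $b^ja$ are again of the form $b^ia$ with $0\le i\le n-1$; hence $\{a,ba,\dots,b^{n-1}a\}$ is closed under passing to states and is therefore the (finite) state set of $a$. For weak diagonality, use $S=\{a,b\}$: from the recursion for $b$ the level-$1$ states $b_i$ satisfy $b_ib^{-1}\in\{1,b^{-1}\}$, and from the recursion for $a$ the level-$1$ states $a_i$ satisfy $a_ia^{-1}\in\{1,b,b^2,\dots,b^{n-1}\}$. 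In every case $s_is^{-1}$ is a power of $b$, which has order dividing $n-1$ in the abelianization $\Z\times\Z/(n-1)$. Hence the action is weakly diagonal, which completes the proof.
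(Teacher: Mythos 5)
Your proof is correct. Three of the four properties are handled exactly as in the paper: for well-definedness you compute the same wreath recursions of $ab$ and $b^n a$, observe agreement in all but the last coordinate, and reduce to the fact that an automorphism with trivial root permutation whose only nontrivial level-$1$ state is itself must be the identity (the paper states this without the explicit ``shortest moved vertex'' justification you supply, which is a nice touch since the relator genuinely does reappear as its own section); for rationality you verify the same state-closed set $\{a,ba,\dots,b^{n-1}a\}$; and for weak diagonality you use the same observation that each $s_is^{-1}$ is a power of $b$, which has finite order in the abelianization.

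For faithfulness, however, you take a genuinely different route. The paper argues directly from the normal form $a^{-k}b^q a^\ell$ that every nontrivial normal subgroup of $BS(1,n)$ contains a nontrivial power of $b$, and then notes that $b^k$ has $b$ as a state for $k>0$, so acts nontrivially. You instead pass to the structural description $BS(1,n)\cong\Z[1/n]\rtimes\Z$: you first show $b$ has infinite order under the action (via the diagonal embedding $b^{n+1}\leftrightarrow(b,\dots,b)$ and minimality, rather than the paper's ``$b$ is a state of $b^k$'' observation), deduce $N\cap\Z[1/n]=0$ since that intersection is a $\Z[1/n]$-submodule, and then use the commutator computation $[g,b]=n^s-1$ to force $N\subseteq\Z[1/n]$. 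Both arguments are sound and roughly the same length; the paper's version is more self-contained and elementary (no appeal to the semidirect product description), while yours is arguably more conceptual and would adapt more readily to other metabelian examples where the module structure is transparent.
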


\begin{proof}
To check that the action is well defined, we need to confirm that the words $ab$ and $b^n a$ have the same action on $\tree_d$. Concatenating the wreath recursions, we compute that $ab$ corresponds to
\begin{align*}
\alpha(a,a,ba,b^2a,\dots,b^{n-1}a)\beta(1,\dots,1,b) &= \alpha\beta(a,ba,b^2a,\dots,b^{n-1}a,a)(1,\dots,1,b) \\
&= \alpha\beta(a,ba,b^2a,\dots,b^{n-1}a,ab)
\end{align*}
and $b^n a$ corresponds to
\begin{align*}
(\beta(1,\dots,1,b))^n \alpha(a,a,ba,b^2a,\dots,b^{n-1}a) &=
\beta^n(1,b,\dots,b) \alpha(a,a,ba,b^2a,\dots,b^{n-1}a) \\
&= \beta^n\alpha(1,b,\dots,b)(a,a,ba,b^2a,\dots,b^{n-1}a) \\
&= \alpha\beta(a,ba,b^2a,\dots,b^{n-1}a,b^na)\text{.}
\end{align*}
Thus we see that the wreath recursion of $b^{-1}a^{-1}b^n a$ is $b^{-1}a^{-1}b^n a \leftrightarrow (1,\dots,1,b^{-1}a^{-1}b^n a)$, which means $b^{-1}a^{-1}b^n a$ acts trivially on $\tree_{n+1}$ as desired.

To check that the action is faithful, we will use the fact that every non-trivial normal subgroup of $BS(1,n)$ contains a non-trivial power of $b$. Indeed, any element can be written in the form $a^{-k}b^q a^\ell$ for $k,\ell\ge 0$ and $q\in\Z$, so if a normal subgroup contains this element, then conjugating by $a$ it must also contain $a^{-k}b^{nq} a^\ell$, hence $a^{-k}b^{(n-1)q} a^k$, hence $b^{(n-1)q}$. Now if our original element is non-trivial, then either $q\ne 0$, so $b^{(n-1)q}$ is non-trivial and we are done, or else $q=0$ and our non-trivial element was $a^{\ell-k}$. Thus our normal subgroup contains $a^{\ell-k} b a^{-(\ell-k)} b^{-1} = b^{n^{\ell-k} - 1}$, which is a non-trivial power of $b$. Now to see that the action is faithful, we just need to show that no non-trivial power of $b$ acts trivially on $\tree_{n+1}$. Indeed, for any $k>0$ the element $b^k$ has $b$ as a state, and so acts non-trivially on $\tree_{n+1}$.

Now we prove that the action is rational. We just need to check that the generators $a$ and $b$ have finitely many states. Clearly $1$ and $b$ are the only states of $b$. We claim that the level-1 states of $a$ comprise all the states of $a$, i.e., we claim that the set $\{a,ba,b^2 a,\dots,b^{n-1} a\}$ is state-closed. For any $0\le k\le n-1$, we have the wreath recursion $b^k \leftrightarrow \beta^k (1,\dots,1,b,\dots,b)$, where the total number of entries equal to $b$ is $k$. Thus, $b^k a$ corresponds to
\begin{align*}
&\beta^k (1,\dots,1,b,\dots,b) \alpha (a,a,ba,b^2 a,\dots,b^{n-1}a) \\ =  &\beta^k \alpha (1,b,\dots,b,1,\dots,1)(a,a,ba,b^2 a,\dots,b^{n-1}a) \text{,}
\end{align*}
where the total number of entries in the first tuple equal to $b$ is $k$. Since $k\le n-1$, this equals $\beta^k \alpha (a,ba,b^2 a,b^3 a,\dots,b^k a,b^k a,\dots,b^{n-1}a)$. This shows that every level-1 state of an element of the set $\{a,ba,b^2 a,\dots,b^{n-1} a\}$ lies again in this set, and so the set is state-closed.

Finally, we need to show that this action is weakly diagonal. Note that $b^{n-1}=aba^{-1}b^{-1}$ is trivial in the abelianization of $BS(1,n)$. Thus, every power of $b$ has finite order in the abelianization. By looking at the wreath recursions of $a$ and $b$, it is clear that for any level-1 state $a_i$ of $a$ the element $a_ia^{-1}$ is a power of $b$, and for any level-1 state $b_i$ of $b$ the element $b_ib^{-1}$ is a power of $b$ (namely $1$ or $b^{-1}$). Thus, indeed the action is weakly diagonal.
\end{proof}

Note that the action is not coarsely diagonal, e.g., $a_3=ba$, so $a_3 a^{-1}=b$ has infinite order. Thus, we could not directly use the results of \cite{skipper19}, and it was necessary to introduce the new notion of weakly diagonal.

Now all the pieces are in place to prove our main result.

\begin{theorem}\label{thrm:exist_simple}
There exist finitely presented simple groups with at least exponential Dehn function.
\end{theorem}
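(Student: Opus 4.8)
The plan is to assemble the pieces developed above into the desired example. Fix any $n\ge 2$ and consider $G=BS(1,n)$ equipped with the self-similar action on $\tree_{n+1}$ constructed just before Proposition~\ref{prop:check_everything}; by that proposition this action is faithful, rational, and weakly diagonal, and of course $BS(1,n)$ is finitely generated and finitely presented. This action need not be persistent, however, so the first step is to feed it into Lemma~\ref{lem:persistent} (applied with $d=n+2$), producing a faithful, persistent, rational, and weakly diagonal self-similar action of $BS(1,n)$ on $\tree_{n+2}$. This is precisely why the examples are built over $\tree_{n+2}$ rather than over $\tree_{n+1}$.

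Next I would apply Corollary~\ref{cor:simple_and_qr} to this action of $G=BS(1,n)$ on $\tree_{n+2}$. It yields three things at once: first, $[V_{n+2}(BS(1,n)),V_{n+2}(BS(1,n))]$ is simple and has finite index in $V_{n+2}(BS(1,n))$; second, since $BS(1,n)$ is finitely presented, $V_{n+2}(BS(1,n))$ is finitely presented; and third, $\delta_{BS(1,n)}\preceq\delta_{V_{n+2}(BS(1,n))}$. Since $BS(1,n)$ has exponential Dehn function for $n\ge 2$ by Gersten \cite{gersten92}, we conclude that $\delta_{V_{n+2}(BS(1,n))}$ is at least exponential.

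Finally I would pass from $V_{n+2}(BS(1,n))$ to its commutator subgroup $H\coloneqq[V_{n+2}(BS(1,n)),V_{n+2}(BS(1,n))]$. A finite-index subgroup of a finitely presented group is again finitely presented (Reidemeister--Schreier), so $H$ is finitely presented; it is simple by the previous step; and a finite-index inclusion is a quasi-isometry, so $\delta_H\simeq\delta_{V_{n+2}(BS(1,n))}$ (equivalently, apply Citation~\ref{cit:qr_dehn} with the inclusions in both directions, since each is a quasi-retraction). Combining, $\delta_{BS(1,n)}\preceq\delta_H$, so $H$ is a finitely presented simple group with at least exponential Dehn function, which proves the theorem.

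As for the main obstacle: at this stage essentially all the work is already done, residing in Proposition~\ref{prop:check_everything} (checking that the carefully calibrated self-similar representation of $BS(1,n)$ has exactly the required properties) and in Lemma~\ref{lem:v_simple} together with Corollary~\ref{cor:simple_and_qr} (the virtual simplicity and the quasi-retraction). Within the present proof, the only points needing care are remembering that the action on $\tree_{n+1}$ is \emph{not} persistent---forcing the detour through Lemma~\ref{lem:persistent}---and the small bit of bookkeeping that the commutator subgroup inherits both finite presentability and Dehn-function equivalence from its finite-index overgroup.
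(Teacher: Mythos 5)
Your proposal is correct and follows essentially the same route as the paper's proof: realize $BS(1,n)$ self-similarly on $\tree_{n+1}$ via Proposition~\ref{prop:check_everything}, upgrade to a persistent action on $\tree_{n+2}$ via Lemma~\ref{lem:persistent}, and then apply Corollary~\ref{cor:simple_and_qr} to get simplicity, finite index, finite presentability, and the Dehn-function lower bound. The only (welcome) difference is that you spell out explicitly why the commutator subgroup itself inherits finite presentability and the exponential lower bound from its finite-index overgroup, a step the paper leaves implicit after recalling earlier that Dehn functions are a quasi-isometry invariant.
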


\begin{proof}
View $BS(1,n)$ as a self-similar group in $\Aut(\tree_{n+1})$ as above. By Proposition~\ref{prop:check_everything}, $BS(1,n)$ is rational and weakly diagonal. Now use Lemma~\ref{lem:persistent} to view $BS(1,n)$ as a persistent self-similar group in $\Aut(\tree_{n+2})$, which is still rational and weakly diagonal. By Corollary~\ref{cor:simple_and_qr}, $V_{n+2}(BS(1,n))$ is finitely presented with $\delta_{BS(1,n)}\preceq \delta_{V_{n+2}(BS(1,n))}$, so the Dehn function of $V_{n+2}(BS(1,n))$ is at least exponential. Corollary~\ref{cor:simple_and_qr} also says that the commutator subgroup $[V_{n+2}(BS(1,n)),V_{n+2}(BS(1,n))]$ has finite index in $V_{n+2}(BS(1,n))$, and is simple. Thus, we conclude that $[V_{n+2}(BS(1,n)),V_{n+2}(BS(1,n))]$ is a finitely presented simple group whose Dehn function is at least exponential.
\end{proof}

\bibliographystyle{alpha}

\end{document}